\newtheorem{thm}{Theorem}[section]
\newtheorem{prop}[thm]{Proposition}
\begin{document}

\title{Root sets of polynomials and power series with finite choices of coefficients}

\author{Simon Baker and Han Yu}
\address{Simon Baker\\ Mathematics institute\\ University of Warwick\\ Coventry\\ CV4 7AL\\ UK}
\email{simonbaker412@gmail.com}
\address{Han Yu\\
School of Mathematics \& Statistics\\University of St Andrews\\ St Andrews\\ KY16 9SS\\ UK \\ }
\curraddr{}
\email{hy25@st-andrews.ac.uk}
\thanks{}

\subjclass[2010]{Primary: 30B30 Secondary: 30C15, 11C08}

\keywords{Root set, Littlewood polynomials, Unimodular polynomials}

\date{}

\dedicatory{}

\begin{abstract}
Given $H\subseteq \mathbb{C}$ two natural objects to study are the set of zeros of polynomials with coefficients in $H$, $$\{z\in \mathbb{C}: \exists k>0,\, \exists (a_n)\in H^{k+1}, \sum_{n=0}^{k}a_{n}z^n=0\},$$ and the set of zeros of power series with coefficients in $H$, $$\{z\in\mathbb{C}: \exists (a_n)\in H^{\mathbb{N}}, \sum_{n=0}^{\infty} a_nz^n=0\}.$$ In this paper we consider the case where each element of $H$ has modulus $1$. The main result of this paper states that for any $r\in(1/2,1),$ if $H$ is  $2\cos^{-1}(\frac{5-4|r|^2}{4})$-dense in $S^1,$ then the set of zeros of polynomials with coefficients in $H$ is dense in $\{z\in \mathbb{C}: |z|\in [r,r^{-1}]\},$ and the set of zeros of power series with coefficients in $H$ contains the annulus $\{z\in \mathbb{C}: |z|\in[r,1)\}$. These two statements demonstrate quantitatively how the set of polynomial zeros/power series zeros fill out the natural annulus containing them as $H$ becomes progessively more dense.

\end{abstract}

\maketitle

\section{Introduction}

Let $H\subseteq \mathbb{C}$ be a finite set. Given such a $H$ we define the root set of polynomials with coefficients in $H$ to be: $$R(H):=\{z\in \mathbb{C}: \exists k>0,\, \exists (a_n)\in H^{k+1}, \sum_{n=0}^{k}a_{n}z^n=0\}.$$ Similarly, we define the root set of power series with coefficients in $H$ to be
 $$R^*(H):=\{z\in \mathbb{C}: \exists (a_n)\in H^{\mathbb{N}}, \sum_{n=0}^{\infty}a_{n}z^n=0\}.$$ The study of the sets $R(H)$ and $R^{*}(H)$ can be dated back to Littlewood \cite{Lit} who studied the case where $H=\{-1,1\}.$ Since then many related works have appeared, most notable amongst these are the number theoretic results of Beaucoup, Borwein, Boyd and Pinner \cite{BBBP}, and Borwein, Erd\'{e}lyi and Littmann \cite{BEL}, who studied the distribution of roots and multiple roots. Related work also appeared in Bousch \cite{Bo}, where it was shown that $R(\{-1,1\})$ is dense in $\{z:|z|^4 \in [1/2,2]\}.$ In Shmerkin and Solomyak \cite{SS} some measure theoretic and topological properties of $R(\{-1,0,1\})$ are studied in detail.

In what follows we will adopt the following notational conventions:
$$S^{r}:=\{z\in \mathbb{C}:|z|=r\},\, B(z,r):=\{z'\in \mathbb{C}:|z'-z|<r\}, $$ and given some interval $I$ in $\mathbb{R}$ let $$A_{I}:=\{z\in \mathbb{C}:|z|\in I\}.$$ In this paper we focus on the case where $H$ is a subset of the unit circle $S^1$. Under this assumption it is straightforward to show that $$R(H)\subseteq A_{[1/2,2]}\textrm{ and }R^{*}(H)\subseteq A_{[1/2,1)}.$$ Intuitively, one might expect that if we allowed $H$ to become a progressively more dense subset of $S^1,$ then $R(H)$ and $R^{*}(H)$ would begin to fill out their respective annuli. The main result of this paper shows that this intuition is correct.

Before stating this result we need to define a metric on $S^1$ to properly quantify the density of $H$. Given $e^{i\theta}, e^{i\theta'}\in S^1$ let $d(e^{i\theta},e^{i\theta'})= \min \{|\theta-\theta'|, |2\pi -(\theta-\theta')|\}.$ This metric outputs the interior angle of the sector of $S^1$ determined by the two radii $e^{i\theta}$ and $e^{i\theta'}.$
\begin{thm}
\label{main theorem}
Fix $r\in (1/2,1)$. Suppose $H\subseteq S^1$ is $2\cos^{-1}(\frac{5-4r^2}{4})$-dense. Then $A_{[r,1)}\subseteq R^{*}(H)$ and $R(H)$ is dense in $A_{[r,r^{-1}]}.$
\end{thm}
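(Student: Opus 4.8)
The plan is to deduce the polynomial statement from the power-series statement, so that the real content becomes showing $A_{[r,1)}\subseteq R^{*}(H)$; I will recast this as a covering/survival problem for an associated iterated function system and resolve it with one quantitative geometric lemma.

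\emph{Reductions.} Fix $z$ with $\rho:=|z|\in[r,1)$ and suppose $\sum_{n\ge0}a_nz^n=0$ with $a_n\in H$. Writing $f(w)=\sum_{n\ge0}a_nw^n$, which is holomorphic on the open unit disc with $f(0)=a_0\neq0$, so $f\not\equiv0$, the truncations $p_N(w)=\sum_{n=0}^{N}a_nw^n$ are $H$-polynomials converging to $f$ locally uniformly; since $z$ is an isolated zero of $f$, Hurwitz's theorem produces zeros of $p_N$ converging to $z$. As $z$ ranges over $A_{[r,1)}$ this gives density of $R(H)$ in $A_{[r,1)}$, hence in its closure $A_{[r,1]}$. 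The coefficient reversal $\sum_{n=0}^{k}a_nw^n\mapsto\sum_{n=0}^{k}a_{k-n}w^n=w^kp(1/w)$ keeps coefficients in $H$ and sends a root $z_0$ to $1/z_0$, so $R(H)$ is invariant under $w\mapsto1/w$; density in $A_{[r,1]}$ thus upgrades to density in $A_{[r,r^{-1}]}$. Everything therefore reduces to the power-series containment, which I treat for each fixed $\rho\in[r,1)$; since the threshold $2\cos^{-1}(\tfrac{5-4\rho^2}{4})$ increases with $\rho$, the hypothesis at $r$ suffices for every such $\rho$.

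\emph{Survival reformulation.} The set $W=W(z)=\{\sum_{n\ge0}a_nz^n:a_n\in H\}$ is the attractor of the IFS $\{w\mapsto h+zw:h\in H\}$ and lies in $\overline{B(0,(1-\rho)^{-1})}$; I want $0\in W$. Equivalently, I run the inverse maps: I seek a bounded region $\mathcal{S}\ni0$ such that every $w\in\mathcal{S}$ admits some $h\in H$ with $(w-h)/z\in\mathcal{S}$. Given such an $\mathcal{S}$, starting from $w_0=0$ and inductively choosing $h_k\in H$ with $w_{k+1}=(w_k-h_k)/z\in\mathcal{S}$, the identity $w_0=\sum_{k=0}^{N}h_kz^k+z^{N+1}w_{N+1}$ together with boundedness of $(w_k)$ forces $\sum_{k\ge0}h_kz^k=0$. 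I will take $\mathcal{S}=\overline{B(0,2)}$: since $2\ge1/\rho$ for $\rho>1/2$, the first step $w_1=-h_0/z$ (of modulus $1/\rho$) already lands inside, and the disc has strictly more room than the minimal radius $1/\rho$, room that turns out to be decisive.

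\emph{The geometric lemma (the crux).} The survival property for $\overline{B(0,2)}$ says: for every $w$ with $|w|\le2$ there is $h\in H$ with $|w-h|\le2\rho$, i.e.\ $\overline{B(0,2)}\subseteq\bigcup_{h\in H}\overline{B(h,2\rho)}$. The binding case is $|w|=2$, where by the law of cosines $|w-h|\le2\rho$ holds exactly when the angle between $w$ and $h$ is at most $\cos^{-1}(\tfrac{5-4\rho^2}{4})$. A crude ``nearest point'' argument (using only that a $\delta$-dense set meets every arc of length $2\delta$) yields the \emph{weaker} threshold $\delta\le\cos^{-1}(\tfrac{5-4\rho^2}{4})$. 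To reach the stated $\delta\le2\cos^{-1}(\tfrac{5-4\rho^2}{4})$ I will carry a second invariant controlling the \emph{argument} of the iterate: exploiting the extra modular room of $\overline{B(0,2)}$ to steer $\arg(w_k)$ so that it always lies within $\delta/2$ of $\arg(H)$, the admissible direction needed at each boundary encounter is then within $\delta/2$ — rather than $\delta$ — of an actual coefficient, which halves the angular budget and produces the factor $2$. The hard part will be exactly this steering: showing that at every stage the freedom in $h_k$ (an arc of admissible directions whose length I must bound from below) suffices simultaneously to keep $w_{k+1}$ in $\overline{B(0,2)}$ and to reposition $\arg(w_{k+1})$ into the $\delta/2$-neighbourhood of $\arg(H)$, with the radius $2$ balancing the opposing facts that a larger disc eases steering but worsens the boundary covering. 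Verifying that both invariants persist, with the extremal configuration at $|w|=2$ giving $\cos(\delta/2)=\tfrac{5-4\rho^2}{4}$, is the main obstacle; the density on the circle $|z|=1$ (via closure) and the $|z|>1$ regime (via the reversal symmetry) are then immediate.
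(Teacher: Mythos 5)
Your reductions and your survival reformulation coincide exactly with the paper's proof: the Hurwitz/argument-principle step plus the coefficient reversal are the paper's Proposition \ref{root relations}, and the inverse-iteration scheme on $\overline{B(0,2)}$, with the law-of-cosines computation showing that at $|w|=2$ the admissible coefficients form an arc of half-angle $\cos^{-1}\bigl(\frac{5-4\rho^2}{4}\bigr)$ about $\arg w$, is precisely the paper's Proposition \ref{beta}. The genuine gap is at what you yourself call the crux: the ``steering'' lemma is never proved, and as sketched it cannot work. Consider the extremal configuration that your invariant permits: $|w_k|=2$ and a single $h\in H$ at angular distance exactly $\cos^{-1}\bigl(\frac{5-4\rho^2}{4}\bigr)$ from $\arg w_k$. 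Then $|w_k-h|=2\rho$ with equality, so $w_{k+1}=(w_k-h)/z$ has modulus exactly $2$ and is \emph{uniquely determined}; there is no freedom left to steer anything. Its argument is $\arg(w_k-h)-\arg z$, which depends on the arbitrary phase of $z$ and the arbitrary placement of $H$, so nothing prevents it from landing at the centre of a maximal gap of $\arg(H)$, destroying the invariant at the next step. An induction whose inductive step fails at an admissible configuration is not a proof.

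The good news is that the problem you are trying to solve is an artifact of your reading of ``$\delta$-dense,'' not a real obstruction. The paper's proof uses the convention that $H$ is $\delta$-dense when every arc of angular length $\delta$ contains a point of $H$ (equivalently, consecutive gaps are at most $\delta$, equivalently every point of $S^1$ is within $\delta/2$ of $H$). Under that reading, the admissible arc, of angular length at least $2\cos^{-1}\bigl(\frac{5-4\rho^2}{4}\bigr)$, automatically contains an element of $H$, and your ``crude nearest point argument''---which you dismiss as giving only a weaker threshold---is already the complete proof of the stated theorem: nearest-point $\cos^{-1}\bigl(\frac{5-4\rho^2}{4}\bigr)$-density and arc-sense $2\cos^{-1}\bigl(\frac{5-4\rho^2}{4}\bigr)$-density are the \emph{same} hypothesis. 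So the correct fix is to delete the steering argument entirely and state the density hypothesis in the arc/gap sense, exactly as the paper's own deduction in Proposition \ref{beta} does. If instead you insist on the nearest-point reading, you are attempting a strictly stronger theorem (allowing gaps in $H$ up to $4\cos^{-1}\bigl(\frac{5-4\rho^2}{4}\bigr)$), which neither the paper proves nor your sketch establishes.
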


The sets $R(H)$ and $R^{*}(H)$ are related by the following formula.
\begin{prop}
\label{root relations}
Let $H\subseteq \mathbb{C}$ be any finite set, then the following relations hold:
$$R(H)=\frac{1}{R(H)}$$ and
$$\overline{R(H)}\cap B(0,1)=R^{*}(H)\cap B(0,1).$$
\end{prop}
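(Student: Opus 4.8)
The plan is to establish the two displayed identities separately, the first being purely algebraic and the second analytic. For $R(H)=1/R(H)$, the key device is the reversed (reciprocal) polynomial. Given $z\neq 0$ with $\sum_{n=0}^{k}a_{n}z^{n}=0$ and $(a_n)\in H^{k+1}$, I would substitute $z=1/w$ and clear denominators: multiplying $\sum_{n}a_{n}w^{-n}$ by $w^{k}$ gives $\sum_{n=0}^{k}a_{n}w^{k-n}=\sum_{m=0}^{k}a_{k-m}w^{m}$, a polynomial whose coefficient sequence $(a_k,\dots,a_0)$ is merely a reversal of the original and hence still lies in $H^{k+1}$. Thus $1/z$ is a root of a polynomial with coefficients in $H$, i.e. $1/z\in R(H)$. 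Since reversal is an involution this yields both inclusions and hence equality (read on $\mathbb{C}\setminus\{0\}$; note $0\in R(H)$ exactly when $0\in H$).

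For the second identity I would first dispose of a degenerate case: if $0\in H$ then the zero polynomial and the zero power series give $R(H)=R^{*}(H)=\mathbb{C}$, so both sides equal $B(0,1)$ and there is nothing to prove. Hence assume $0\notin H$; then every power series with coefficients in $H$ has constant term $a_0\neq 0$, so it is not identically zero, and since $H$ is finite its coefficients are bounded, giving radius of convergence at least $1$, so such series are holomorphic on $B(0,1)$. Next I would show $R(H)\cap B(0,1)\subseteq R^{*}(H)$: given a polynomial $\sum_{n=0}^{k}a_{n}z^{n}$ vanishing at $z$ with $|z|<1$, I periodically repeat its coefficients by setting $b_{m}=a_{m\bmod(k+1)}$, so that $\sum_{m=0}^{\infty}b_{m}z^{m}=\big(\sum_{n=0}^{k}a_{n}z^{n}\big)\sum_{j\geq 0}z^{j(k+1)}=0\cdot(1-z^{k+1})^{-1}=0$, the rearrangement being justified by absolute convergence for $|z|<1$; thus $(b_m)\in H^{\mathbb{N}}$ witnesses $z\in R^{*}(H)$.

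I would then prove that $R^{*}(H)\cap B(0,1)$ is closed in $B(0,1)$. Take $z_{j}\to z$ with $|z|<1$, each $z_{j}$ a zero of some $f_{j}=\sum_{n}a_{n}^{(j)}w^{n}$ with $(a_{n}^{(j)})\in H^{\mathbb{N}}$. Since $H$ is finite, $H^{\mathbb{N}}$ is compact in the product topology, so along a subsequence $a_{n}^{(j)}\to a_{n}\in H$ for every $n$. Fixing $\rho\in(|z|,1)$, one has $|z_{j}|\leq\rho$ eventually and $|a_{n}^{(j)}z_{j}^{n}|\leq M\rho^{n}$ with $M=\max_{h\in H}|h|$, a summable bound independent of $j$; dominated convergence for series then gives $\sum_{n}a_{n}z^{n}=\lim_{j}\sum_{n}a_{n}^{(j)}z_{j}^{n}=0$, so $z\in R^{*}(H)\cap B(0,1)$. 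Combining with the previous step, any $z\in\overline{R(H)}\cap B(0,1)$ is a limit of points of $R(H)$ which, being eventually inside the open disk, lie in $R(H)\cap B(0,1)\subseteq R^{*}(H)\cap B(0,1)$, whence $z\in R^{*}(H)\cap B(0,1)$ by closedness.

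For the reverse inclusion $R^{*}(H)\cap B(0,1)\subseteq\overline{R(H)}\cap B(0,1)$, fix $z$ with $|z|<1$ and a series $f=\sum_{n}a_{n}w^{n}$, $(a_n)\in H^{\mathbb N}$, with $f(z)=0$, and truncate by $P_{N}=\sum_{n=0}^{N}a_{n}w^{n}$. These converge to $f$ uniformly on compact subsets of $B(0,1)$, and $f\not\equiv 0$ by the reduction above, so Hurwitz's theorem yields zeros $z_{N}$ of $P_{N}$ with $z_{N}\to z$; each $z_{N}\in R(H)$, giving $z\in\overline{R(H)}$. The two inclusions then give the identity. I expect the analytic heart — the uniform control underpinning both the closedness of $R^{*}(H)\cap B(0,1)$ and the legitimate application of Hurwitz's theorem (in particular the non-vanishing of $f$, secured by discarding the case $0\in H$) — to be the main point needing care; the algebraic first identity and the periodic-extension step are routine by comparison.
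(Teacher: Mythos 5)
Your proof is correct, and while it uses the paper's main ingredients, the forward inclusion is organized around a genuinely different decomposition. The paper fixes $z^*\in\overline{R(H)}\cap B(0,1)$, takes roots $z_i\to z^*$ of polynomials $P_i$, proves the quantitative bound $|P_i(z^*)|\le C|z^*-z_i|$, and then splits into two cases according to whether the degrees of the $P_i$ stay bounded (producing a single polynomial $Q$ with $Q(z^*)=0$, extended periodically to a power series) or tend to infinity (producing a limiting coefficient sequence by diagonalization). You instead isolate two cleaner statements: (i) $R(H)\cap B(0,1)\subseteq R^{*}(H)$, by applying the periodic-extension trick to \emph{every} polynomial rather than only in the bounded-degree case; and (ii) $R^{*}(H)\cap B(0,1)$ is relatively closed in $B(0,1)$, via compactness of $H^{\mathbb{N}}$ plus dominated convergence, evaluating each series at its own root $z_j$ rather than at the limit point. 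This eliminates both the case analysis and the estimate $|P_i(z^*)|\le C|z^*-z_i|$, and statement (ii) is a reusable fact in its own right; the price is that you invoke slightly heavier standard machinery (product-space compactness, dominated convergence) where the paper's argument is bare-hands. For the reverse inclusion the two proofs coincide in substance: the paper's contradiction argument with $I_N=\int_{\partial B(z^*,r')}P_N'(x)/P_N(x)\,dx$ is precisely the standard proof of Hurwitz's theorem, which you cite directly. A small point in your favour: by disposing of the case $0\in H$ first, you secure the hypothesis $f\not\equiv 0$ that Hurwitz's theorem (and equally the paper's claim that the zeros of $P$ form a discrete set) requires; the paper silently assumes this, which is harmless for $H\subseteq S^1$ but not for the arbitrary finite $H$ of the proposition's statement.
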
In the statement of Proposition \ref{root relations}, $\overline{A}$ denotes the closure of a set $A,$ and $\frac{1}{A}$ denotes the set $\{z\in \mathbb{C}:z^{-1}\in A\}.$

\begin{proof}
Given $z\in \mathbb{C}$ suppose there is  polynomial $P\in H[x]$ such that  
\[
P(z)=\sum_{n=0}^{k}a_n z^n=0.
\]
We can construct another polynomial $Q\in H[x]$ such that $Q(1/z)=0$. Just consider $Q(x)=x^kP(\frac{1}{x})$ with $k=\deg P$. Therefore, whenever $z\in R(H)$ we also have $1/z\in R(H)$. This proves our first relation. 

Now we shall show that
\[
\overline{R(H)}\cap B(0,1)=R^{*}(H)\cap B(0,1).
\]

Without loss of generality we can assume that $H\subseteq\{z: |z|\leq 1\}$.
If $z^*\in \overline{R(H)}\cap B(0,1)$ then we can find a sequence $(z_i)\in R(H)^{\mathbb{N}}$ with:
\[
z_i\to z^*.
\]
Moreover, since $z^*\in B(0,1)$ there exists a positive number $M$ such that $1<M<\frac{1}{|z^*|}$. Now let us consider any polynomial in $H[x]$
\[
P(x)=\sum_{n=0}^{k}a_n x^n.
\]
The following result holds for $|z_i|\leq M|z^*|$
\begin{eqnarray*}
	|P(z^*)-P(z_i)|&\leq&\sum_{n=0}^{k}|a_n||(z^*)^n-z_i^n|=\sum_{n=0}^{k}|a_n||z^*-z_i||(z^*)^{n-1}+(z^*)^{n-2}z_i+...+z_i^{n-1}|\\
	&\leq& \sum_{n=0}^{k}|a_n||z^*-z_i|n(M|z^*|)^{n-1}\\
	&\leq& |z^*-z_i|\sum_{n=0}^{k}|a_n|n(M|z^*|)^{n-1}.
\end{eqnarray*}

Since $|a_n|\leq 1$ and $M|z^*|<1$ the latter summation can be bounded uniformly with respect to $k$, namely:
\[
\sum_{n=0}^{k}|a_n|n(M|z^*|)^{n-1}\leq C.
\]
Where $C>0$ is a constant that only depends upon $z^*$. 

Each $z_i$ is the root of some polynomial $P_i\in H[x]$, in which case by the above, for $i$ sufficiently large we have
\begin{equation}
\label{power series bound}
|P_i(z^*)|=|P_i(z^*)-P(z_i)|\leq C|z^*-z_i|.
\end{equation}

For the sequence $(P_i)$ there is either a uniform upper bound for the degrees of the $P_i$, or there exists a subsequence along which the degrees tend to infinity. In the first case there must exist a polynomial $Q\in H[x]$ and a subsequence $(P_{i_{j}})$ such that $P_{i_{j}}=Q$ for all $i_j.$ By \eqref{power series bound} we must then have $Q(z^*)=0$. Suppose $\deg Q =L,$ then 
$$T(x)=Q(x)\sum_{n=0}^{\infty}x^{n(L+1)}$$ is a power series with digits in $H$. For this particular power series we clearly have $T(z^*)=0$. Therefore in the first case we have $z^*\in R^*(H)$. Now suppose there exists a subsequence $(P_{i_j})$ such that $\deg P_{i_j}\to\infty.$ Via a diagonalisation argument, one can assume without loss of generality that there exists a sequence $(a_n)\in H^{\mathbb{N}}$ and an increasing sequence of natural numbers $(l_n),$ such that for all $i_j\geq l_n$ the coefficient of the degree $n$ term of $P_{i_j}$ is $a_n.$ In other words, as the $i_j$ become sufficiently large the lower order terms of the $P_{i_j}$'s start to coincide. It follows from \eqref{power series bound} then that for this sequence $(a_n)$ we must have
$$\sum_{n=0}^{\infty}a_n(z^*)^n=0.$$ Therefore $z^*\in R^*(H)$ and $\overline{R(H)}\cap B(0,1)\subseteq R^{*}(H)\cap B(0,1).$

Now suppose $z^*\in R^*(H)\cap B(0,1).$ Then there is a sequence $(a_n)\in H^{\mathbb{N}}$ such that
\[
\sum_{n=0}^{\infty}a_n (z^*)^n=0.
\]
This series is absolutely and uniformly convergent in $B(0,c)$ for any $0<c<1$. Since $z^*\in B(0,1)$ it is contained in one of these sets for $c$ sufficiently close to $1$. We see that the function
\[
P(x)=\sum_{n=0}^{\infty}a_n x^n
\]
is holomorphic on the interior of the unit disc and therefore the roots of $P$ must form a discrete set. Since $z^*$ belongs to the root set of $P$ there must exist $r>0$ such that
\[
\{z\in\mathbb{C}:P(z)=0\}\cap B(z^*,r)=\{z^*\}.
\]
Now suppose that $z^*\notin\overline{R(H)}$, then there exists a ball $B(z^*,r')$ such that $B(z^*,r')\subseteq B(z^*,r)$ and
\[
R(H)\cap \overline{B(z^*,r')}=\emptyset.
\]
We can then consider the following integral with $P_N(x)=\sum_{n=0}^{N}a_n x^n$
\[
I_N=\int_{\partial B(z^*,r')}\frac{P_N'(x)}{P_N(x)}dx.
\]
By our conditions on $r'$ we see that $P_N$ has no zeros in $\overline{B(z^*,r')}$ for all $N\in \mathbb{N}$. Therefore by the argument principle (see \cite[p.~152]{Ahl}) we must have $I_N\equiv 0.$  One can also assume that $r'$ is sufficiently small that $P_N$ converges to $P$ absolutely and uniformly. Therefore
\[
0=\lim_{N\to\infty}I_N=\int_{\partial B(z^*,r')}\frac{P'(x)}{P(x)}dx.
\]
However, it follows from another application of the argument principle, and the fact that $P(x)$ has a single zero in  $\overline{B(z^*,r')}$ at $z^*,$ that the above integral cannot be $0$. This contradiction implies $z^*\in \overline{R(H)}$ and our proof is complete.

\end{proof}

It is natural to wonder whether there exists a set $H$ such that the sets $R(H)$ and $R^{*}(H)$ fill up their ambient annuli, that is $A_{[1/2,2]}$ and $A_{[1/2,1)}$ respectively. In fact such a $H$ cannot exist. For any $H\subseteq S^1$ there exists $z\in\mathbb{C}$ with modulus $1/2$ and $\delta>0,$ such that $R(H)\cap B(z,\delta)=\emptyset$ and $R^{*}(H)\cap B(z,\delta)=\emptyset$. This is because of the following simple reasoning. Since $H$ is a finite set there exists $z\in\mathbb{C}$ such that $|z|=1/2$ and $$|a_i+a_jz|>1/2$$ for all $a_i,a_j\in H$. Equivalently
\begin{equation}
\label{strict}
|a_i+a_jz|>\frac{|z|^2}{1-|z|}
\end{equation} for all $a_i,a_j\in H$. By continuity equation \eqref{strict} holds under small pertubations of $z$. Therefore there must exist $\delta>0,$ such that for all $z'\in B(z,\delta)$ we have $$|a_i+a_jz'|>\frac{|z'|^2}{1-|z'|}.$$ Since $$|\sum_{n=2}^{k}a_n(z')^n|\leq \frac{|z'|^2}{1-|z'|}$$ for all $(a_n)\in H^{k}$ and $k\in \mathbb{N},$ it follows that $z'$ cannot be the zero of a power series or a polynomial. Therefore we must have $R(H)\cap B(z,\delta)=\emptyset$ and $R^{*}(H)\cap B(z,\delta)=\emptyset$.

 \section{Proof of Theorem \ref{main theorem}}
We now turn our attention to proving Theorem \ref{main theorem}. We start with the following technical proposition.

\begin{prop}
\label{beta}
Let $z\in A_{(1/2,1)}$. Suppose $H$ is $2\cos^{-1}(\frac{5-4|z|^2}{4})$-dense, then for any $z'\in \overline{B(0,2)}$ there exists $a\in H$ such that $z^{-1}(z'-a)\in \overline{B(0,2)}$.

\end{prop}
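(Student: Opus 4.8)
The plan is to reformulate the conclusion geometrically. Writing $z' = a + z w$ for the element $a \in H$ we seek, the requirement $z^{-1}(z'-a) \in \overline{B(0,2)}$ says exactly that $w \in \overline{B(0,2)}$, i.e. that $z'$ lies in the image of the disc $\overline{B(0,2)}$ under the affine map $w \mapsto a + zw$. That image is the disc $\overline{B(a, 2|z|)}$ of radius $2|z|$ centered at $a$. So the proposition is equivalent to the covering statement
\[
\overline{B(0,2)} \subseteq \bigcup_{a \in H} \overline{B(a, 2|z|)}.
\]
First I would establish this reformulation, and then reduce the problem to a density estimate: since every $a \in H$ has modulus $1$, the centers $a$ all lie on $S^1$, and I need the union of discs of radius $2|z|$ around a sufficiently dense set of points on the unit circle to swallow all of $\overline{B(0,2)}$.

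Next I would fix an arbitrary target point $z' \in \overline{B(0,2)}$ and locate the element of $H$ nearest in angle to $z'$. Concretely, pick the point $e^{i\phi} \in S^1$ with the same argument as $z'$ (choosing $\phi$ arbitrarily if $z'=0$); because $H$ is $2\cos^{-1}\!\big(\tfrac{5-4|z|^2}{4}\big)$-dense in the metric $d$, there exists $a \in H$ whose angular distance from $e^{i\phi}$ is at most $\cos^{-1}\!\big(\tfrac{5-4|z|^2}{4}\big)$. The bulk of the work is then the single triangle-type estimate bounding $|z' - a|$, and showing it does not exceed $2|z|$. I would split this using the intermediate point $e^{i\phi}$: control $|z' - e^{i\phi}|$, which for a point of modulus at most $2$ lying on the ray through $e^{i\phi}$ is just $\big||z'| - 1\big| \le 1$, and separately control $|e^{i\phi} - a|$, the chord length subtended by the angular gap.

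The heart of the calculation — and the step I expect to be the main obstacle — is showing that the chosen radius $2|z|$ is large enough. The worst case is $|z'|$ near the outer boundary (modulus $2$) combined with the maximal allowed angular gap, so I would verify that the chord $|e^{i\phi} - a|$, equal to $2\sin(\tfrac{\alpha}{2})$ for angular gap $\alpha \le \cos^{-1}\!\big(\tfrac{5-4|z|^2}{4}\big)$, together with $|z'-e^{i\phi}| \le 1$, yields $|z'-a| \le 2|z|$. Using $\cos\alpha \ge \tfrac{5-4|z|^2}{4}$ gives $2\sin^2(\tfrac{\alpha}{2}) = 1 - \cos\alpha \le \tfrac{4|z|^2 - 1}{4}$, so the chord-length-squared $4\sin^2(\tfrac{\alpha}{2}) \le 4|z|^2 - 1$. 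It therefore suffices to confront the exact value $\tfrac{5-4|z|^2}{4}$ inside the cosine: I would replace the crude triangle inequality $|z'-a| \le |z'-e^{i\phi}| + |e^{i\phi}-a|$ with a sharper computation via the law of cosines in the triangle with vertices $0$, $e^{i\phi}$, $a$, exploiting that $z'$ is collinear with $0$ and $e^{i\phi}$. This is precisely why the constant $\tfrac{5-4|z|^2}{4}$ is engineered as it is, and verifying the resulting inequality $|z'-a|^2 \le 4|z|^2$ for all $|z'| \le 2$ is the decisive step.

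Finally, having produced for each $z' \in \overline{B(0,2)}$ an $a \in H$ with $|z'-a| \le 2|z|$, I would translate back through the reformulation to conclude $z^{-1}(z'-a) \in \overline{B(0,2)}$, completing the proof. The only care needed at the boundary is the degenerate case $z'=0$, where any $a \in H$ works since $|z|>1/2$ forces $2|z|>1=|a|$; this is covered by the same estimate.
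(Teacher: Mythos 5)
Your proposal is correct and is essentially the paper's own proof viewed after multiplying through by $z$: your covering condition $|z'-a|\le 2|z|$ is exactly the paper's condition $z^{-1}(z'-a)\in \overline{B(0,2)}$, your use of density to pick $a\in H$ within angle $\cos^{-1}\big(\frac{5-4|z|^2}{4}\big)$ of $\arg z'$ matches the paper's arc argument on the circle $S_{z'z^{-1}}^{|z|^{-1}}$, and your law-of-cosines triangle with sides $1$, $2$, $2|z|$ is the paper's triangle with sides $|z|^{-1}$, $2$, $2|z|^{-1}$ rescaled. The one step you leave unverified does close: $|z'|^2+1-2|z'|\cos\alpha$ is convex in $|z'|$, so given $\cos\alpha\ge\frac{5-4|z|^2}{4}$ its maximum over $|z'|\in[0,2]$ is attained at an endpoint (the value at $|z'|=0$ is $1<4|z|^2$), and at $|z'|=2$ it equals exactly $4|z|^2$.
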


\begin{proof}
Let us start by fixing $z'\in \overline{B(0,2)}.$ Consider the point $z'z^{-1}.$ Clearly $z'z^{-1}\in\overline{B(0,2|z|^{-1})}.$ Let $$S_{z'z^{-1}}^{|z|^{-1}}:=\{\omega\in \mathbb{C}:|\omega-z'z^{-1}|=|z|^{-1}\}.$$ Since $z\in A_{(1/2,1)}$ we must have $S_{z'z^{-1}}^{|z|^{-1}}\cap \overline{B(0,2)}\neq \emptyset$. In fact this intersection must contain an arc of $S_{z'z^{-1}}^{|z|^{-1}}$. This arc is parameterised by two radii of $S_{z'z^{-1}}^{|z|^{-1}}$ with interior angle $\theta$. See Figure \ref{Fig1} for a diagram describing the intersection of  $S_{z'z^{-1}}^{|z|^{-1}}$ with  $\overline{B(0,2)}.$ It is easy to see that the angle $\theta$ is minimised when $z'z^{-1}$ is as far from the origin as possible, i.e., when $z'$ has modulus $2$. Employing elementary techniques from geometry we can see that the angle $\theta$ is at least twice the size of a particular angle of the triangle whose sides have length $|z|^{-1},$ $2,$ and $2|z|^{-1}$ (see Figure \ref{Fig1}). Therefore we can use the well known cosine rule from trigonometry to show that $\theta$ is always bounded below by $$2\cos^{-1}\Big(\frac{5-4|z|^2}{4}\Big).$$

\begin{figure}[h]
\begin{center}
\begin{tikzpicture}
\draw[thick,->] (-4.5,0) -- (4.5,0) ;
\draw[thick,->] (0,-4.5) -- (0,4.5) ;
\draw (0,0) circle (2.5cm);
\draw(-2.5,2.5) circle(2cm);
\node[anchor=south east] at (-2.4,2.5) {$z'z^{-1}$};
\fill (-2.5,2.5) circle[radius=2pt];
\draw (-2.5,2.5) -- (-0.50,2.46);
\draw (-2.5,2.5) -- (-2.46,0.5);
\node[anchor=north west] at (-2.5,2.5) {$\theta$};
\draw [dashed] (0,0) -- (-0.50,2.46);
\draw [dashed] (0,0) -- (-2.50,2.5);
\draw [dashed] (0,0) --  (-2.46,0.5);
\end{tikzpicture}
\end{center}
\caption{A diagram of $S_{z'z^{-1}}^{|z|^{-1}}$ intersecting  $\overline{B(0,2)}$.}
\label{Fig1}
\end{figure}
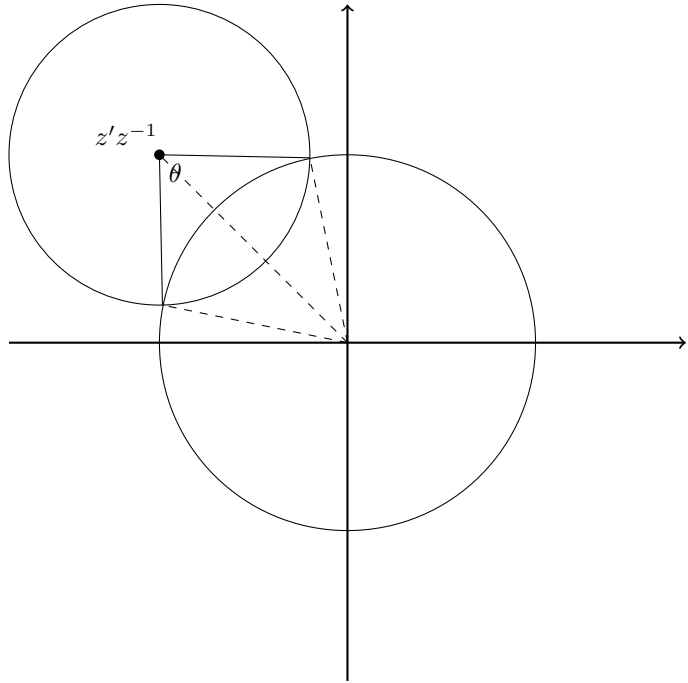

Since $H$ is $2\cos^{-1}\Big(\frac{5-4|z|^2}{4}\Big)$-dense as a subset of $S^1,$ there must exist $a\in H$ such that $z'z^{-1}-az^{-1}$ is contained in the arc of $S_{z'z^{-1}}^{|z|^{-1}}$ which intersects $\overline{B(0,2)}.$ In particular, for this choice of $a$ we have $z^{-1}(z'-a)\in \overline{B(0,2)}.$ 
\end{proof}

Theorem \ref{main theorem} now follows almost immediately from Proposition \ref{beta}.

\begin{proof}[Proof of Theorem \ref{main theorem}]
By the relations given in Proposition \ref{root relations} to prove Theorem \ref{main theorem} it is sufficient to only prove the statement relating to $R^{*}(H)$. Fix $r\in(1/2,1)$ and let $H$ be a $2\cos^{-1}\Big(\frac{5-4r^2}{4}\Big)$-dense subset of $S^1$. Note that $H$ is automatically $2\cos^{-1}\Big(\frac{5-4|z|^2}{4}\Big)$-dense for any $z\in A_{[r,1)}$. So we can apply Proposition \ref{beta} for any $z\in A_{[r,1)}$. Let us now fix $z\in A_{[r,1)}$ and apply Proposition \ref{beta} when $z'=0.$ So there exists $a_0\in H$ such that $x_0:=z^{-1}(-a_0)\in \overline{B(0,2)}.$ Rearranging yields $$0=a_0+x_0z.$$ Applying Proposition \ref{beta} again with $x_0$ in the place of $z'$ yields $a_1$ and $x_1:=z^{-1}(x_0-a_1),$ such that $x_1\in \overline{B(0,2)}$ and
\begin{equation}
\label{substitution}
0=a_0+a_1z+x_1z^2.
\end{equation} One can then apply Proposition \ref{beta} with $z'=x_1$ . Repeating this procedure indefinitely yields a sequence $(a_n)$ and $(x_n)$ such that $x_{n+1}=z^{-1}(x_n-a_{n+1})$ for all $n\in\mathbb{N}$. The terms in $(x_n)$ remain in $\overline{B(0,2)}.$ Therefore we are able to repeatedly apply the substitution $x_{n+1}=z^{-1}(x_n-a_{n+1})$ in \eqref{substitution} and we obtain $$0=\sum_{n=0}^{\infty}a_nz^n.$$ Therefore $z\in R^{*}(H)$. Since $z$ was arbitrary we have $A_{[r,1)}\subseteq R^{*}(H)$.

\end{proof}
The proof of Theorem \ref{main theorem} was based upon ideas from $\beta$-expansions. The argument given relied upon adapting methods from \cite{BakG} and \cite{Parry}.  The proof can easily be adapted to show that under the hypothesis of the theorem, for every $z'\in \overline{B(0,2)}$ there exists $(a_n)\in H^{\mathbb{N}}$ such that $\sum_{n=0}^{\infty}a_nz^n=z'$.
\\

\section{Some further problems}
There are some more challenging problems related to root sets $R(H),R^*(H).$ We mentioned in the beginning of this paper that the exist various results of multiple roots (\cite{BBBP},\cite{BEL}). We say that a $z\in\mathbb{C}$ is a multiple root of a holomorphic function $f$ of order $k$ if for all integers $i=0,1,2,...,k$
\[
f^{(i)}(z)=0.
\]   
Adopting the notation in this paper we can define for any integer $k\geq 0$:
$$R_k(H):=\{z\in \mathbb{C}: \exists k>0,\, \exists (a_n)\in H^{k+1}, P(w)=\sum_{n=0}^{k}a_{n}w^n, \text{ $z$ is a $k$-th order root of } P(w)\}.$$
$$R_k^*(H):=\{z\in \mathbb{C}: \exists (a_n)\in H^{\mathbb{N}}, P(w)=\sum_{n=0}^{\infty}a_{n}w^n, \text{ $z$ is a $k$-th order root of } P(w)\}.$$ 
Not so much has been studied about the above multiple root set, some partial results can be found in \cite{SS}. We can for example consider the following questions:
\begin{itemize}
	\item Are $R_k(H),R_k^*(H)$ dense in any non-degenerate annulus?
	\item What about the connectness and path-connectness of $R_k(H),R_k^*(H)$? 	
	\item What can we see about the boundary of $R_k(H),R_k^*(H)$? 
\end{itemize}

\noindent \textbf{Acknowledgements.} The first author is supported by the EPSRC grant EP/M001903/1. The second author is supported by a PhD scholarship provided by the School of
Mathematics in the University of St Andrews. The authors are grateful to Jonathan Fraser, Tom Kempton, and Sascha Troscheit for fruitful discussions.

\bibliography{unimodular_polynomials}
\bibliographystyle{amsalpha}

\end{document}